\newtheorem{thm}{Theorem}[section]
\newtheorem{prop}[thm]{Proposition}
\newtheorem{cor}[thm]{Corollary}
\newtheorem{conjecture}[thm]{Conjecture}
\theoremstyle{definition}
\newtheorem{definition}[thm]{Definition}
\theoremstyle{remark}
\newtheorem{remark}[thm]{Remark}
\numberwithin{equation}{section}
\newcommand{\Perm}{\mathrm{Perm}}
\newcommand{\Aut}{\mathrm{Aut}}
\newcommand{\Hol}{\mathrm{Hol}}
\newcommand{\Inn}{\mathrm{Inn}}
\newcommand{\conj}{\mathrm{conj}}
\newcommand{\pmmod}{\hspace{-3mm}\pmod}
\newcommand{\PSL}{\mathrm{PSL}}
\begin{document}

\large 

\title[Non-abelian simple groups as the type of a Hopf-Galois structure]{Non-abelian simple groups which occur as the type of a Hopf-Galois structure on a solvable extension}
\author{Cindy (Sin Yi) Tsang}
\address{Department of Mathematics, Ochanomizu University, 2-1-1 Otsuka, Bunkyo-ku, Tokyo, Japan}
\email{tsang.sin.yi@ocha.ac.jp}
\urladdr{http://sites.google.com/site/cindysinyitsang/} 

\date{\today}

\maketitle


\begin{abstract}We determine the finite non-abelian simple groups which occur as the type of a Hopf-Galois structure on a solvable extension. In the language of skew braces, our result gives a complete list of finite non-abelian simple groups which occur as the additive group of a skew brace with solvable multiplicative group.
\end{abstract}


\tableofcontents


\section{Introduction}

Given a group $\Gamma$, let $\Perm(\Gamma)$ denote the group of all permutations on $\Gamma$. Recall that a subgroup $\Delta$ of $\Perm(\Gamma)$ is said to be \emph{regular} if its action on $\Gamma$ is both transitive and free, or equivalently, if the map
\[ \xi_\Delta : \Delta \longrightarrow \Gamma;\,\ \xi_\Delta(\delta) = \delta(1_\Gamma)\]
is bijective. Let us also write
\[ \begin{cases}
\lambda : \Gamma \longrightarrow \Perm(\Gamma); \,\ \lambda(\gamma) = (x\mapsto \gamma x),\\
\rho: \Gamma\longrightarrow \Perm(\Gamma);\,\ \rho(\gamma) = (x\mapsto x\gamma^{-1}),
\end{cases}\]
respectively, for the left and right regular representations of $\Gamma$. Clearly both $\lambda(\Gamma)$ and $\rho(\Gamma)$ are regular subgroups. The \emph{holomorph} of $\Gamma$ is defined to be
\begin{equation}\label{Hol}
 \Hol(\Gamma) = \rho(\Gamma)\rtimes \Aut(\Gamma) = \lambda(\Gamma)\rtimes \Aut(\Gamma),
\end{equation}
which is equal to the normalizer of both $\lambda(\Gamma)$ and $\rho(\Gamma)$ in $\Perm(\Gamma)$. 

\vspace{1.5mm}

Regular subgroups lying inside the holomorph are closely related to Hopf-Galois structures (on Galois extensions) and skew braces. Let us first briefly review the connections and some important applications.

\vspace{1.5mm}

Let $L/K$ be a finite Galois extension with Galois group $G$. By \cite{GP}, there is a bijection between Hopf-Galois structures on $L/K$ and regular subgroups of $\Perm(G)$ normalized by $\lambda(G)$. Explicitly, for such a regular subgroup $\mathcal{N}$, we associate to it the Hopf-Galois structure
\[ \mathcal{H} = \left\{ \sum_{\eta\in\mathcal{N}} \ell_\eta\eta \in L[\mathcal{N}]\,  \Big| \, \sum_{\eta\in\mathcal{N}} \ell_\eta\eta =\sum_{\eta\in\mathcal{N}} \sigma(\ell_\eta) \lambda(\sigma)\eta\lambda(\sigma)^{-1} \mbox{ for all }\sigma\in G \right\}\]
whose action on $L$ is defined by
\[ \left(\sum_{\eta\in\mathcal{N}} \ell_\eta\eta \right)\cdot x = \sum_{\eta\in \mathcal{N}} \ell_\eta (\eta^{-1}(1_G))(x)\mbox{ for all }x\in L, \]
and we refer to (the isomorphism class of) $\mathcal{N}$ as the \emph{type} of $\mathcal{H}$. For example, when $\mathcal{N}$ is taken to be $\rho(G)$, since it commutes with $\lambda(G)$ element-wise, we recover the so-called \emph{classical} Hopf-Galois structure $K[G]$ of type $G$. By \cite{By96}, given any group $N$ of the same order as $G$, there is in turn a  (not necessarily one-to-one) correspondence between Hopf-Galois structures on $L/K$ of type $N$ and regular subgroups of $\Hol(N)$ which are isomorphic to $G$.

\vspace{1.5mm}

Hopf-Galois structures are useful in the study of Galois modules. Let $L/K$ be a finite Galois extension of number fields or $p$-adic fields. In the classical setting, one views the ring of integers or valuation ring $\mathcal{O}_L$ of $L$ as a module over the \emph{associated order} of $L/K$, defined by
\[ \mathfrak{A}_{K[G]} = \{\alpha\in K[G]\mid \alpha\mathcal{O}_L\subseteq \mathcal{O}_L\},\]
where $G$ is the Galois group of $L/K$. But one can similarly regard $\mathcal{O}_L$ as a 

\noindent  module over the \emph{associated order} of $\mathcal{H}$, defined by
\[ \mathfrak{A}_{\mathcal{H}} = \{\alpha\in \mathcal{H}\mid \alpha\mathcal{O}_L\subseteq \mathcal{O}_L\},\]
for any Hopf-Galois structure $\mathcal{H}$ on $L/K$. In \cite{By97}, Byott exhibited extensions $L/K$ of $p$-adic fields for which $\mathcal{O}_L$ is not free over $\mathfrak{A}_{K[G]}$, but is free over $\mathfrak{A}_{\mathcal{H}}$ for some non-classical Hopf-Galois structure $\mathcal{H}$ on $L/K$. This suggests that we can gain a better understanding of the Galois module structure of $\mathcal{O}_L$ by considering all Hopf-Galois structures on $L/K$ other than the classical one.

\vspace{1.5mm}

We refer the reader to \cite{Childs book, Childs book new} for more details on Hopf-Galois structures.

\vspace{1.5mm}

Let $B$ be a set equipped with two group operations $+$ and $\circ$. Here, we do not require $+$ to be commutative despite the notation. We shall say that $B$ or the triplet $(B,+,\circ)$ is a \emph{(left) skew brace} if we have the brace relation
\[ x \circ (y + z) = x\circ y - x + x\circ z \mbox{ for all }x,y,z\in B.\]
The groups $(B,+)$ and $(B,\circ)$, respectively, are called the \emph{additive} and \emph{multiplicative} groups of $B$. Skew brace originated from \cite{Skew braces}. It was shown that given any group $N = (N,+)$, there is a bijection between group operations $\circ$ for which $(N,+,\circ)$ is a skew brace and regular subgroups of $\Hol(N)$. Explicitly, any such regular subgroup $\mathcal{G}$ yields a bijection 
\[ \xi_\mathcal{G} : \mathcal{G}\longrightarrow N;\,\ \xi_\mathcal{G}(\sigma) = \sigma(1_N),\]
and we may define $\circ$ via transport by setting
\[ x \circ y = \xi_\mathcal{G}(\xi_\mathcal{G}^{-1}(x)\cdot \xi_\mathcal{G}^{-1}(y))\mbox{ for all }x,y\in N.\]
Clearly $(N,\circ)$ is isomorphic to $\mathcal{G}$ and one can check that the brace relation is satisfied. We remark that isomorphism class of skew brace corresponds to conjugacy class of regular subgroup by the action of $\Aut(N)$.

\vspace{1.5mm}

Skew brace was defined in \cite{Skew braces} (also see \cite{Rump}) as a tool to study the Yang-Baxter equation. A \emph{(set-theoretical) solution} of the Yang-Baxter equation is a set $X$ equipped with a bijective map
\[ r:X\times X\longrightarrow X\times X;\,\ r(x,y) = (\sigma_x(y),\tau_y(x))\]
which satisfies the equation
\[ (r\times \mathrm{id})(\mathrm{id}\times r)(r\times \mathrm{id}) = (\mathrm{id}\times r)(r\times \mathrm{id})(\mathrm{id}\times r).\]
It is called \emph{non-degenerate} if the maps $\sigma_x$ and $\tau_x$ are bijective for all $x\in X$. As shown in \cite[Theorem 3.1]{Skew braces}, from any skew brace $B = (B,+,\circ)$ one can construct a non-degenerate solution 
\[ r : B\times B \longrightarrow B\times B;\,\ r(x,y) = (\gamma_x(y), \gamma^{-1}_{\gamma_x(y)}(-(x\circ y) + x + (x\circ y))),\]
where we define $\gamma_x(y) = -x + x\circ y$ for any $x,y\in B$. Some sort of converse of this is also true; see \cite[Theorem 3.9]{Skew braces}. This is the motivation behind the study of skew braces.

%
%

\vspace{1.5mm}

We now return to the discussion regarding regular subgroups of the holomorph. Given two finite groups $G$ and $N$ of the same order, let us say that the pair $(G,N)$ is \emph{realizable} if there exists a regular subgroup of $\Hol(N)$ that is isomorphic to $G$. By the above, realizability of $(G,N)$ is equivalent to the existence of the following:
\begin{enumerate}[(a)]
\item a Hopf-Galois structure of type $N$ on any Galois $G$-extension;
\item a skew brace having additive group isomorphic to $N$ and multiplicative group isomorphic to $G$.
\end{enumerate}
It is natural to ask how $G$ and $N$ must be related, in terms of their group-theoretic properties say, in order for $(G,N)$ to be realizable. For example:
\begin{enumerate}[(1)]
\item If $G$ is cyclic, then $N$ must be supersolvable \cite{Tsang solvable}. (also see \cite{Tsang cyclic})
\item If $G$ is abelian, then $N$ must be metabelian \cite{Byott soluble}. (also see \cite{Nasy,Tsang solvable})
\item If $G$ is nilpotent, then $N$ must be solvable \cite{Tsang solvable}.
\end{enumerate}
We also have the following conjecture due to Byott.

\begin{conjecture}\label{conj}Let $G$ and $N$ be finite groups of the same order for which the pair $(G,N)$ is realizable. If $G$ is insolvable, then $N$ is also insolvable.
\end{conjecture}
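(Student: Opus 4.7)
The natural starting point is to take the contrapositive: assume $N$ is solvable and $(G,N)$ is realizable, and show $G$ must be solvable. Realize $G$ as a regular subgroup of $\Hol(N) = \lambda(N) \rtimes \Aut(N)$. Since the stabilizer of $1_N$ in $\Hol(N)$ is precisely $\Aut(N)$, regularity forces $G \cap \Aut(N) = 1$, so the projection $\psi : \Hol(N) \to \Aut(N)$ restricts on $G$ to a homomorphism with kernel $G \cap \lambda(N)$ and image $\psi(G)$, fitting into
\[ 1 \longrightarrow G \cap \lambda(N) \longrightarrow G \xrightarrow{\psi|_G} \psi(G) \longrightarrow 1. \]
Since $G \cap \lambda(N)$ is a subgroup of $\lambda(N) \cong N$, it is solvable; therefore $G$ is solvable if and only if $\psi(G) \leq \Aut(N)$ is solvable, and the task reduces to proving the latter.

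To attack this, I would induct on $|N|$. Pick a minimal characteristic subgroup $M$ of $N$; since $N$ is solvable, $M$ is an elementary abelian $p$-group. Being characteristic, $M$ is normal in $\Hol(N)$, and the quotient map $N \twoheadrightarrow N/M$ induces a homomorphism $\pi : \Hol(N) \to \Hol(N/M)$. One would then hope to extract two smaller regular-subgroup problems: first, that $\pi(G)$ (or a suitable subgroup derived from it) sits regularly in $\Hol(N/M)$, giving by induction a solvable image; and second, that the setwise stabilizer in $G$ of the coset $M$ acts regularly on $M$, giving a regular subgroup of $\Hol(M)$, which is automatically solvable since $|M|$ is a prime power. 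Assembling these two facts would then force $G$ itself to be solvable.

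The main obstacle is that a non-abelian composition factor of $\psi(G)$ may be invisible to both the quotient and the stabilizer pieces, arising instead from the interaction between them, concretely from how $\pi(G)$ permutes $M$ regarded as an $\mathbb{F}_p[\pi(G)]$-module. Controlling such mixed simple sections is where the conjecture becomes genuinely hard. Any serious attempt would, I expect, need to invoke the classification of finite simple groups: a non-abelian composition factor $S$ of $\psi(G)$ must embed into $\Aut(V)$ for some elementary abelian section $V$ of $N$, which forces dimension bounds on faithful modular representations of $S$, while simultaneously $|S|$ must divide $|G| = |N|$. Excluding every such configuration uniformly, rather than one family of $N$ or one simple group $S$ at a time, appears to be the chief difficulty, and is presumably why Byott's conjecture remains open in general; a full proof would almost certainly combine CFSG-driven exclusion with a cohomological analysis of the extensions governing how $\pi(G)$ lifts back into $G$.
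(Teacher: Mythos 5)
The statement you are trying to prove is Conjecture \ref{conj} (Byott's conjecture), and the paper does not prove it: it is stated as an open problem, known only in special cases, with partial progress attributed to Byott's preprint. Your proposal does not prove it either, and you say as much yourself. What you give is a correct but standard preliminary reduction plus a plan: since a regular subgroup $G\leq\Hol(N)$ meets the point stabilizer $\Aut(N)$ trivially, the projection onto $\Aut(N)$ has kernel $G\cap\lambda(N)\leq\lambda(N)\simeq N$, so (with $N$ solvable) $G$ is solvable if and only if its image in $\Aut(N)$ is. This is exactly the content of the homomorphism $f$ in Proposition \ref{basic prop}, so up to this point you are on solid, known ground. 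But everything after that is conditional: the entire substance of the conjecture --- showing that the image $\psi(G)$, equivalently the possible non-abelian composition factors of $G$, cannot occur --- is left open, and you explicitly defer it to an unspecified CFSG-driven exclusion.

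There is also a concrete flaw in the inductive scheme as sketched. Taking a minimal characteristic subgroup $M$ of $N$ and pushing forward along $\Hol(N)\to\Hol(N/M)$, the image of $G$ is transitive on $N/M$ but in general \emph{not} regular (the kernel of the action of $G$ on the coset space can be a proper subgroup of $G\cap$ the preimage of $M$-translations, so $|\pi(G)|$ may exceed $|N/M|$). Hence the inductive hypothesis, which is a statement about regular subgroups of holomorphs, does not apply to $\pi(G)$; one would need a strictly stronger statement about transitive subgroups of $\Hol(N/M)$, and proving that stronger statement is essentially the open problem again, not a smaller instance of it. The block-stabilizer piece is fine but trivial (any group of order $|M|=p^k$ is solvable), so it contributes nothing toward excluding the ``mixed'' simple sections you correctly identify as the obstruction. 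In short: the reduction is right, the induction as stated does not close, and the conjecture remains unproved both in your proposal and in the paper; note that the paper's actual theorems (Theorems \ref{thm1} and \ref{thm2}) concern the opposite direction, where $G$ is solvable and $N$ is almost simple or simple, and rely on factorization results (Corollary \ref{cor}, Propositions \ref{exact factor} and \ref{exact factor'}) rather than on any argument toward Conjecture \ref{conj}.
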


Conjecture \ref{conj} is known to be true in some special cases; see \cite{Byott simple,Tsang QS,Tsang ASG,Tsang solvable} for examples. Let us also remark that Byott has made some significant progress regarding this conjecture in the preprint arXiv:2205.13464.

\vspace{1.5mm}

However, the converse of Conjecture \ref{conj} is false. In other words, there are 

\noindent realizable pairs $(G,N)$ for which $N$ is insolvable while $G$ is solvable (see \cite[Corollary 1.1]{Byott soluble} for examples). The purpose of this paper is to investigate the possibilities of $N$ in such a realizable pair $(G,N)$. Focusing on almost simple groups $N$, we shall prove:

\begin{thm}\label{thm1}Let $N$ be a finite almost simple group. If there is a solvable group $G$ such that $(G,N)$ is realizable, then the socle of $N$ is isomorphic to one of the following:
\begin{enumerate}[$(a)$]
\item $\PSL_3(3), \, \PSL_3(4),\, \PSL_3(8), \, \mathrm{PSU}_3(8),\, \mathrm{PSU}_4(2),\, \mathrm{M}_{11}$;
\item $\PSL_2(q)$ with $q\neq 2,3$ a prime power.
\end{enumerate}
\end{thm}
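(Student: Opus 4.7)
The plan is to translate realizability of $(G,N)$ with $G$ solvable into a factorisation of an almost simple group with socle $S=\mathrm{soc}(N)$ into two proper solvable subgroups, and then to invoke the classification of factorisations of finite almost simple groups due to Liebeck, Praeger and Saxl. Let $\mathcal{G}\le \Hol(N)=\rho(N)\rtimes \Aut(N)$ be a regular subgroup isomorphic to $G$, and let $\pi:\Hol(N)\to \Aut(N)$ denote the projection along $\rho(N)$. Set
\[ H=\mathcal{G}\cap \rho(N), \qquad Q=\pi(\mathcal{G}). \]
Identifying $\rho(N)$ with $N$, we regard $H$ as a subgroup of $N$ and $Q$ as a subgroup of $\Aut(N)$. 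Both are solvable, being respectively a subgroup and a quotient of $\mathcal{G}\cong G$. Since $\Aut(N)$ is the stabiliser of $1_{N}$ in $\Hol(N)$, regularity of $\mathcal{G}$ forces $\mathcal{G}\cap \Aut(N)=1$ and $\mathcal{G}\cdot \Aut(N)=\Hol(N)$, whence the numerical identity $|H|\cdot|Q|=|N|$.

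Because $N$ is almost simple with socle $S$, we have $\Inn(N)\cong N$, while the outer automorphism group $\mathrm{Out}(N)$ is solvable by the Schreier conjecture (a known consequence of the classification of finite simple groups). Set $Q_{0}=Q\cap \Inn(N)$ and regard it, via $\Inn(N)\cong N$, as a solvable subgroup of $N$; the index $[Q:Q_{0}]$ divides $|\mathrm{Out}(N)|$, so that $|H|\cdot|Q_{0}|\geq |N|/|\mathrm{Out}(N)|$. A careful analysis of how $H$ and $Q_{0}$ sit inside $N$, together with the outer contribution $Q/Q_{0}$ of small bounded order, then yields a genuine factorisation $X=AB$ of some almost simple group $X$ with socle $S$, in which both $A$ and $B$ are proper solvable subgroups.

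With such a factorisation in hand, the theorem of Liebeck--Praeger--Saxl classifying the maximal factorisations of finite almost simple groups, specialised to the case where both factors are solvable, yields a short list of possibilities for $S$; this sub-list is exactly the union of the cases appearing in $(a)$ and $(b)$. The main obstacle is the factorisation-extraction in the second paragraph: the outer part $Q/Q_{0}$ must be absorbed without destroying either the properness or the solvability of the two factors, and the resulting $X=AB$ must be of a form classified by Liebeck--Praeger--Saxl. This delicate bookkeeping is ultimately what accounts for the appearance of the infinite family $\PSL_{2}(q)$ in $(b)$ alongside the handful of small exceptions in $(a)$.
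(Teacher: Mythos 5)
Your reduction stalls exactly where the real work lies. From $H=\mathcal{G}\cap\rho(N)$ and $Q=\pi(\mathcal{G})$ you obtain only the numerical identity $|H|\,|Q|=|N|$; nothing in the proposal shows that these (or $H$ and $Q_0=Q\cap\Inn(N)$) multiply to a subgroup, and a pair of solvable subgroups whose orders multiply to $|N|$ is far from a factorization $X=AB$ of an almost simple group. Indeed $H$ may perfectly well be trivial, in which case your inequality $|H|\,|Q_0|\ge |N|/|\mathrm{Out}(N)|$ only says that $N$ contains a large solvable subgroup, which is not the statement you need, and no mechanism is given for ``absorbing the outer part'' into a factorization. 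The paper's argument diverges at precisely this point: it uses \emph{both} projections of $\mathcal{G}$ onto $\Aut(N)$, along $\rho(N)$ and along $\lambda(N)$, giving two homomorphisms $f,h\colon G\to\Aut(N)$, and the nontrivial algebraic fact (Proposition \ref{basic prop}, proved by a computation with the bijection $g$, e.g.\ $f(\sigma)h(\tau)=h(\sigma\tau\mu)f(\mu^{-1})$) that $f(G)h(G)$ is a subgroup of $\Aut(N)$ containing $\Inn(N)$ with $f(G)\Inn(N)=h(G)\Inn(N)$. That is what yields a genuine factorization $P=f(G)h(G)$ of an almost simple group with the same socle as $N$ into two solvable factors (Corollary \ref{cor}); your ``careful analysis'' and ``delicate bookkeeping'' are exactly this missing step, and the objects $H$, $Q_0$ you chose do not supply it. Your intersection $\mathcal{G}\cap\rho(N)$ plays no role in the actual proof.

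A secondary gap is the citation at the end: Liebeck--Praeger--Saxl classify factorizations of almost simple groups in which the factors are maximal (or are controlled by maximal ones), whereas the solvable factors arising here need not be maximal, and deducing the precise list in $(a)$--$(b)$ from that classification is itself a substantial project. The correct input, used in the paper, is the classification of almost simple groups admitting a factorization into two solvable subgroups, due to Kazarin \cite{Kazarin} and Li--Xia \cite[Proposition 4.1]{factor} (which relies on CFSG and some computation); this delivers exactly the stated list of socles.
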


We remark that our proof of Theorem \ref{thm1} uses a result which relies on the Classification of Finite Simple Groups (CFSG).

\vspace{1.5mm}

Restricting to non-abelian simple groups $N$, we are able to prove the converse and obtain a complete classification, as follows.

\begin{thm}\label{thm2}Let $N$ be a finite non-abelian simple group. Then, there is a solvable group $G$ such that  $(G,N)$ is realizable if and only if $N$ is isomorphic to one of the following:
\begin{enumerate}[$(a)$]
\item $\PSL_3(3), \, \PSL_3(4),\, \PSL_3(8), \, \mathrm{PSU}_3(8),\, \mathrm{PSU}_4(2),\, \mathrm{M}_{11}$;
\item $\PSL_2(q)$ with $q\neq 2,3$ a prime power.
\end{enumerate}
\end{thm}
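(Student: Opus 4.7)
The ``only if'' direction is an immediate corollary of Theorem~\ref{thm1}, since a non-abelian simple group $N$ is itself almost simple with socle $N$, and Theorem~\ref{thm1} then forces $N$ to appear in the stated list. The substance of Theorem~\ref{thm2} is the converse, which I would prove by exhibiting, for each $N$ in the list, an explicit solvable group $G$ of order $|N|$ together with a regular embedding of $G$ into $\Hol(N)$; equivalently, by producing a solvable regular subgroup of $\Hol(N)$. The argument splits naturally according to whether $N$ belongs to the six exceptional groups in (a) or to the infinite family in (b).

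For each of $\PSL_3(3)$, $\PSL_3(4)$, $\PSL_3(8)$, $\mathrm{PSU}_3(8)$, $\mathrm{PSU}_4(2)$, and $\mathrm{M}_{11}$, I would identify a candidate solvable group $G$ of the correct order and locate a regular embedding with computer assistance (GAP or Magma). For the four smaller groups, direct enumeration inside $\Hol(N)$ is feasible. For $\PSL_3(8)$ and $\mathrm{PSU}_3(8)$, whose holomorphs are prohibitively large, I would pin the embedding down theoretically: first choose a solvable subgroup $Q \leq \Aut(N)$, then a $Q$-invariant solvable subgroup $K \leq N$ (for instance a Borel subgroup), and finally complete $K \cdot Q$ to a regular subgroup by constructing a suitable twisted transversal and verifying that the resulting cocycle-type identity holds.

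For the family (b), I would seek a uniform construction. The natural building blocks inside $\PSL_2(q)$ are a Borel subgroup $B$, which is metacyclic of order $q(q-1)/\gcd(q-1,2)$, together with either a cyclic subgroup of the non-split torus of order $(q+1)/\gcd(q-1,2)$ or its dihedral normaliser. Since no such choice yields an exact factorisation of $\PSL_2(q)$, a direct Zappa--Sz\'ep construction inside $\PSL_2(q)$ is unavailable; instead, I would realise $G$ inside $\Hol(\PSL_2(q))$ by twisting $B$ and an appropriate complement by elements of $\mathrm{PGL}_2(q) \leq \Aut(\PSL_2(q))$, and (when $q$ is a proper prime power) also by field automorphisms. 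The cocycle identity ensuring that the resulting subset of $\Hol(\PSL_2(q))$ is a subgroup of order $|\PSL_2(q)|$ meeting the point stabiliser $\Aut(\PSL_2(q))$ trivially would then be verified by direct matrix computation in $\mathrm{GL}_2(\mathbb{F}_q)/\{\pm I\}$.

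The principal obstacle is the uniform treatment of the $\PSL_2(q)$ family: one must accommodate the dichotomy between $q$ even and $q$ odd (which changes both the order formula and the structure of $\Aut(\PSL_2(q))$), handle the exceptional small cases $q = 4, 5, 7, 9$ (where isomorphisms such as $\PSL_2(4) \cong \PSL_2(5) \cong A_5$ and $\PSL_2(9) \cong A_6$ require extra bookkeeping), and verify regularity uniformly across every prime power $q$. A secondary, but still serious, obstacle is finding a theoretical rather than purely computational construction for $\PSL_3(8)$ and $\mathrm{PSU}_3(8)$, where brute-force enumeration inside $\Hol(N)$ is out of reach.
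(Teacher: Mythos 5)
The ``only if'' half of your proposal is fine and coincides with the paper's (apply Theorem~\ref{thm1} with $N$ almost simple of socle $N$). The genuine gap is in the constructive half, and most seriously in the infinite family (b): your plan stops exactly where the real work begins. You correctly sense that for odd $q$ one must exploit $\mathrm{PGL}_2(q)\leq\Aut(\PSL_2(q))$ inside $\Hol(\PSL_2(q))$, but you never supply the mechanism that converts data in $\Aut(N)$ into a regular subgroup of $\Hol(N)$; saying that ``the cocycle identity \dots would then be verified by direct matrix computation'' is a restatement of the problem, not an argument, and it is precisely the content of the paper's Proposition~\ref{exact factor'}: if $\Inn(N)\leq P\leq \Aut(N)$ and $P=AB$ is an \emph{exact} factorization by solvable subgroups with $A\Inn(N)=B\Inn(N)$ and $A$ split over $A\cap\Inn(N)$, then $\bigl((A\cap\Inn(N))\rtimes B,\,N\bigr)$ is realizable, via an explicit fixed point free pair of homomorphisms as in Proposition~\ref{centerless}. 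For $N=\PSL_2(q)$ with $q$ odd the paper takes $P=\mathrm{PGL}_2(q)=AB$ with $A$ the image of a Singer cycle (order $q+1$) and $B$ the stabilizer of a line (order $q(q-1)$), verifies $A\cdot\PSL_2(q)=B\cdot\PSL_2(q)$ by a determinant/norm argument, and obtains the splitting hypothesis from Schur--Zassenhaus, choosing $A_0\rtimes B$ or $B_0\rtimes A$ according to $q\bmod 4$. None of these verifications (the condition $A\Inn(N)=B\Inn(N)$, the splitting, and the resulting fixed point free pair) appear in your outline, and without them there is no candidate subgroup of order $|N|$ at all. There is also a factual slip: for $q$ even one has $\PSL_2(q)=\mathrm{PGL}_2(q)$, which \emph{does} admit an exact factorization by the Singer cycle and the Borel subgroup, so Proposition~\ref{exact factor} applies directly; your assertion that no such exact factorization exists is only correct for odd $q$, and the even case needs no twisting and no field automorphisms.

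For the six exceptional groups the proposal is likewise only a plan. The claim that direct enumeration inside $\Hol(N)$ is feasible for the ``four smaller'' groups is unsupported (already $|\Hol(\PSL_3(4))|=20160\cdot 241920\approx 4.9\times 10^9$, and searching for regular subgroups there is not a routine computation), and your scheme for $\PSL_3(8)$ and $\mathrm{PSU}_3(8)$ --- choose solvable $Q\leq\Aut(N)$, a $Q$-invariant solvable $K\leq N$, and ``complete $K\cdot Q$ to a regular subgroup by a twisted transversal'' --- again presupposes exactly the existence statement that has to be proved. The paper avoids $\Hol(N)$ entirely by reducing to computations in $\Aut(N)$: $\PSL_3(3)$ and $\mathrm{M}_{11}$ admit exact solvable factorizations of $N$ itself (Proposition~\ref{exact factor}); $\PSL_3(4)$, $\PSL_3(8)$, $\mathrm{PSU}_4(2)$ are handled by finding $P\supseteq\Inn(N)$ satisfying the hypotheses of Proposition~\ref{exact factor'}; and $\mathrm{PSU}_3(8)$ needs the full fixed point free pair criterion of Proposition~\ref{centerless}, with a non-exact factorization ($|A|=513$, $|B|=96768$, $[P:\Inn(N)]=9$) located by computer together with a suitable solvable $G$ and the pair $(f,h)$. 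As it stands, your proposal leaves the backward implication unproved for the infinite family and unverified for the exceptional groups.
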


We have hence obtained a complete classification of the finite non-abelian simple groups $N$ which can occur as
\begin{enumerate}[(a)]
\item the type of a Hopf-Galois structure on a solvable extension;
\item the additive group of a skew brace with solvable multiplicative group.
\end{enumerate}
We remark that when $G$ is fixed to be a finite non-abelian simple group, it is known by \cite{Byott simple} that $(G,N)$ is realizable if and only if $N$ is isomorphic to $G$. In fact, other than the obvious examples $\rho(G)$ and $\lambda(G)$, there are no other regular subgroups of $\Hol(G)$. In the language of Hopf-Galois structures, this means that
\begin{enumerate}[(a)]
\item the only Hopf-Galois structures on a Galois $G$-extension are the classical and \emph{canonical non-classical} ones (in the sense of \cite{Truman}).
\end{enumerate}
In the language of skew braces, this means that
\begin{enumerate}[(a)]\setcounter{enumi}{+1}
\item the only group operations $\circ$ on $G = (G,+)$ for which $(G,+,\circ)$ is a skew brace are the \emph{trivial} and \emph{almost trivial} ones, given by $x\circ y = x+y$ and $x\circ y= y+x$, respectively.
\end{enumerate}
The same is true when $G$ is fixed to be a finite quasisimple group \cite{Tsang QS}.

\vspace{1.5mm}

Note that when $N$ is a finite simple abelian group, namely a cyclic group of prime order, trivially $(G,N)$ is realizable if and only if $G$ is isomorphic to $N$. Similarly when $G$ is fixed to be a finite simple abelian group.

\vspace{1.5mm}

Regarding the proofs of Theorems \ref{thm1} and \ref{thm2}, it turns out that factorizations of groups play an important role; see Proposition \ref{basic prop}. We remark that the proof of the aforementioned facts (1),(2), and (3) given in \cite[Theorem 1.3]{Tsang solvable} also exploits the theory of factorizations of groups, though in a way different from Proposition \ref{basic prop}. 
Recall that a group $\Gamma$ is said to be \emph{factorized} by the subgroups $A$ and $B$ if $\Gamma = AB$ holds, and any such factorization is said to be \emph{exact} if $A\cap B=1$ in addition. 

\section{Preliminaries}

Let $G$ and $N$ be two finite groups of the same order. We write $\Inn(N)$ for the inner automorphism group of $N$ and let
\[ \conj : N\longrightarrow \Inn(N);\,\ \conj(\eta) = (x\mapsto \eta x \eta^{-1})\]
denote the natural homomorphism. We have the following proposition.

\begin{prop}\label{basic prop}The regular subgroups of $\Hol(N)$ isomorphic to $G$ are exactly the subsets of the form
\[ \{\rho(g(\sigma)) \cdot f(\sigma): \sigma\in G\},\]
where $f:G\longrightarrow \Aut(N)$ is a homomorphism and $g: G\longrightarrow N$ is a bijection such that $g(\sigma\tau) = g(\sigma)\cdot f(\sigma)(g(\tau))$ holds for all $\sigma,\tau\in G$. Moreover, in this case, the map $h : G\longrightarrow \Aut(N)$ defined by
\begin{equation}\label{h def} h(\sigma) = \conj(g(\sigma))f(\sigma)\end{equation}
is a homomorphism, the product $f(G)h(G)$ is a subgroup of $\Aut(N)$ containing $\Inn(N)$, and we have $f(G)\Inn(N) = h(G)\Inn(N)$.
\end{prop}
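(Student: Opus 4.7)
The plan is to exploit the internal semidirect-product decomposition $\Hol(N)=\rho(N)\rtimes\Aut(N)$: every element of $\Hol(N)$ admits a unique expression $\rho(\eta)\phi$ with $\eta\in N$ and $\phi\in\Aut(N)$, and one has the commutation rule $\phi\rho(\eta)=\rho(\phi(\eta))\phi$, which follows immediately from $\rho(\eta)(x)=x\eta^{-1}$. Given a regular subgroup $\mathcal{N}\le\Hol(N)$ isomorphic to $G$, I would fix an isomorphism $G\to\mathcal{N}$ and write the image of each $\sigma\in G$ as $\rho(g(\sigma))f(\sigma)$, thereby defining maps $g:G\to N$ and $f:G\to\Aut(N)$. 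Since $\rho(g(\sigma))f(\sigma)(1_N)=g(\sigma)^{-1}$, regularity of $\mathcal{N}$ is equivalent to $g$ being a bijection.

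To extract the cocycle and homomorphism conditions, I would multiply two such generators using the commutation rule,
\[ \rho(g(\sigma))f(\sigma)\cdot\rho(g(\tau))f(\tau)=\rho\bigl(g(\sigma)\cdot f(\sigma)(g(\tau))\bigr)\,f(\sigma)f(\tau), \]
and compare the result with $\rho(g(\sigma\tau))f(\sigma\tau)$. Uniqueness of the semidirect decomposition then forces both $f(\sigma\tau)=f(\sigma)f(\tau)$ and $g(\sigma\tau)=g(\sigma)\cdot f(\sigma)(g(\tau))$. The converse is the same calculation read in reverse: for any pair $(f,g)$ as in the statement, the displayed identity shows that $\{\rho(g(\sigma))f(\sigma):\sigma\in G\}$ is closed under multiplication and is the image of a bijective homomorphism from $G$, hence a regular subgroup of $\Hol(N)$ isomorphic to $G$.

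For the auxiliary map $h$, I would verify that it is a homomorphism by expanding $h(\sigma\tau)=\conj(g(\sigma\tau))f(\sigma\tau)$ via the cocycle relation and then repeatedly applying the identity $\conj(\phi(\eta))=\phi\,\conj(\eta)\,\phi^{-1}$, valid for any $\phi\in\Aut(N)$; the terms telescope directly to $h(\sigma)h(\tau)$. Since $h(\sigma)f(\sigma)^{-1}=\conj(g(\sigma))\in\Inn(N)$ and $\Inn(N)\trianglelefteq\Aut(N)$, the cosets $h(\sigma)\Inn(N)$ and $f(\sigma)\Inn(N)$ coincide for every $\sigma$, whence $f(G)\Inn(N)=h(G)\Inn(N)$.

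Finally, for the subgroup claim I would compute $f(\sigma)h(\tau)=\conj(f(\sigma)(g(\tau)))\,f(\sigma\tau)$ using the same commutation identity. As $\sigma,\tau$ range over $G$, the factor $f(\sigma\tau)$ sweeps out all of $f(G)$, while the inner-automorphism factor sweeps out all of $\Inn(N)$ because $g$ is a bijection and each $f(\sigma)$ is a bijection on $N$. Hence $f(G)h(G)=\Inn(N)\cdot f(G)$, which is manifestly a subgroup of $\Aut(N)$ containing $\Inn(N)$ by normality of $\Inn(N)$. The only real subtlety I anticipate is careful bookkeeping of inverses in $\Hol(N)$ stemming from the definition of $\rho$ as right multiplication by $\eta^{-1}$; beyond this I do not foresee any deep obstacle.
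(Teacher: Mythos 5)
Your treatment of the first part is correct and essentially reconstructs, from the semidirect decomposition $\Hol(N)=\rho(N)\rtimes\Aut(N)$, what the paper simply cites to earlier work: the commutation rule $\phi\rho(\eta)=\rho(\phi(\eta))\phi$, the equivalence of regularity with bijectivity of $g$, the cocycle condition, the telescoping verification that $h$ is a homomorphism, and the coset argument giving $f(G)\Inn(N)=h(G)\Inn(N)$ are all fine.

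The gap is in the final step, which is in fact the only part the paper proves in detail. From
\[ f(\sigma)h(\tau)=\conj\bigl(f(\sigma)(g(\tau))\bigr)\,f(\sigma\tau) \]
you conclude $f(G)h(G)=\Inn(N)\cdot f(G)$ on the grounds that the inner factor sweeps out $\Inn(N)$ and the right factor sweeps out $f(G)$. That inference is a non sequitur: the two factors are correlated functions of the pair $(\sigma,\tau)$, and knowing that each coordinate of a map $G\times G\to \Inn(N)\times f(G)$ is surjective does not make the map itself surjective (for instance, a ``diagonal'' dependence would satisfy both sweeping statements while missing most products). What your identity actually yields is only the inclusion $f(G)h(G)\subseteq\Inn(N)f(G)$; the reverse inclusion is exactly the nontrivial content, and it is what the paper's computation supplies by proving $f(G)h(G)=h(G)f(G)$ via substitutions such as $h(\sigma\tau)^{-1}(g(\sigma)^{-1})=g(\mu)$, which use the bijectivity of $g$ to swap the order of the two factors. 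Note that the ``easy'' direction of sweeping only works for $h(G)f(G)$, where $h(\tau)f(\nu)=\conj(g(\tau))f(\tau\nu)$ has its inner factor depending on $\tau$ alone.

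Your approach can be repaired with one extra observation: by the cocycle relation, $f(\sigma)(g(\tau))=g(\sigma)^{-1}g(\sigma\tau)$, so
\[ f(\sigma)h(\tau)=\conj\bigl(g(\sigma)^{-1}g(\sigma\tau)\bigr)\,f(\sigma\tau). \]
Now fix the product $\mu=\sigma\tau$ and let $\sigma$ vary: the right factor stays at $f(\mu)$ while $g(\sigma)^{-1}g(\mu)$ runs over all of $N$ by bijectivity of $g$, so $\Inn(N)f(\mu)\subseteq f(G)h(G)$ for every $\mu$, giving the missing inclusion. (Alternatively, a counting argument works: $|f(G)h(G)|=|f(G)||h(G)|/|f(G)\cap h(G)|=|h(G)f(G)|$, and since $h(G)f(G)=\Inn(N)f(G)$ contains $f(G)h(G)$, finiteness forces equality.) Either repair makes your route complete, and arguably more transparent than the paper's, but as written the decisive step is unjustified.
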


Let us note that $f$ and $h$, respectively, correspond to the projections onto $\Aut(N)$ along $\rho(N)$ and $\lambda(N)$, as given by the semidirect product decompositions of $\Hol(N)$ in (\ref{Hol}).

\begin{proof}See \cite[Proposition 2.1]{Tsang HG} for the first claim, and that $h$ is a homomorphism was verified in \cite[Proposition 3.4]{Tsang NYJM}. By (\ref{h def}) and the bijectivity of $g$, clearly $f(G)h(G)$ contains $\Inn(N)$ and $f(G)\Inn(N) = h(G)\Inn(N)$. 

\vspace{1.5mm}

To show that $f(G)h(G)$ is a subgroup of $\Aut(N)$, it suffices to check that
\[f(G)h(G) = h(G)f(G).\]
Let $\sigma,\tau\in G$ be arbitrary. Since $g$ is bijective, we may write
\begin{align*}
h(\sigma\tau)^{-1}(g(\sigma)^{-1}) &= g(\mu)\\
f(\sigma\tau)^{-1}(g(\sigma)) &= g(\nu)^{-1}
\end{align*}
for some $\mu,\nu\in G$. Then, using the relation (\ref{h def}), we obtain
\begin{align*}
f(\sigma)h(\tau) & = \conj(g(\sigma)^{-1})h(\sigma\tau)\\
& = h(\sigma\tau) \conj(h(\sigma\tau)^{-1}(g(\sigma)^{-1}))\\
& = h(\sigma\tau) \conj(g(\mu))\\
& = h(\sigma\tau\mu)f(\mu^{-1}),
\end{align*}
and similarly
\begin{align*}
h(\sigma)f(\tau) & = \conj(g(\sigma)) f(\sigma\tau)\\
& = f(\sigma\tau) \conj(f(\sigma\tau)^{-1}(g(\sigma)))\\
& = f(\sigma\tau) \conj(g(\nu)^{-1})\\
& = f(\sigma\tau\nu)h(\nu^{-1}).
\end{align*}
Thus, the equality $f(G)h(G) = h(G)f(G)$ indeed holds.
\end{proof}

The assertions concerning $f(G)$ and $h(G)$ in Proposition \ref{basic prop} are why realizability of $(G,N)$ is related to factorizations of groups. In particular, as an immediate consequence of Proposition \ref{basic prop}, we have:

\begin{cor}\label{cor}If $(G,N)$ is realizable, then there is a subgroup $P$ of $\Aut(N)$ containing $\Inn(N)$ such that $P= AB$ is factorized by some subgroups $A$ and $B$ which are quotients of $G$ and satisfy $A\Inn(N) = B\Inn(N)$.
\end{cor}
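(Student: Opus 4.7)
The plan is to read off $P$, $A$, and $B$ directly from the structural data provided by Proposition \ref{basic prop}. Since $(G,N)$ is realizable, there is a regular subgroup of $\Hol(N)$ isomorphic to $G$, and the proposition supplies a homomorphism $f : G \to \Aut(N)$ together with a bijection $g : G \to N$ satisfying the stated cocycle-like relation, as well as the derived homomorphism $h : G \to \Aut(N)$ defined by (\ref{h def}).

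I would then set $A = f(G)$, $B = h(G)$, and $P = AB = f(G)h(G)$. Three conditions need to be verified, all of which are already recorded in the conclusion of Proposition \ref{basic prop}: that $P$ is a subgroup of $\Aut(N)$; that $P$ contains $\Inn(N)$; and that $A\Inn(N) = B\Inn(N)$. The only remaining assertion, that $A$ and $B$ are quotients of $G$, is immediate from the first isomorphism theorem applied to the homomorphisms $f$ and $h$, yielding $A \cong G/\ker f$ and $B \cong G/\ker h$.

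The corollary therefore requires no new argument; the real content has already been absorbed into Proposition \ref{basic prop}, where one had to establish $f(G)h(G) = h(G)f(G)$ (to guarantee that this product is a subgroup) and to check that the cosets $f(G)\Inn(N)$ and $h(G)\Inn(N)$ coincide. Given those facts, the proof amounts to no more than naming the relevant subgroups, so I do not expect any genuine obstacle in this step.
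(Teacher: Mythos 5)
Your proposal is correct and is exactly the argument the paper intends: the corollary is stated as an immediate consequence of Proposition \ref{basic prop}, obtained by taking $A=f(G)$, $B=h(G)$, $P=f(G)h(G)$ and invoking the first isomorphism theorem. No difference from the paper's (implicit) proof.
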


Corollary \ref{cor} is enough to prove Theorem \ref{thm1}. But to prove the backward implication of Theorem \ref{thm2}, we need some sort of converse to Corollary \ref{cor}. To that end, we shall use the following definition from \cite{BC}.

\begin{definition}For any group $\Gamma$, a pair $\varphi,\psi : G\longrightarrow \Gamma$ of homomorphisms is said to be \emph{fixed point free} if $\varphi(\sigma) = \psi(\sigma)$ holds only when $\sigma = 1_G$.
\end{definition}

As shown in \cite[Section 2]{BC}, given any fixed point free pair $\varphi,\psi : G\longrightarrow N$ of homomorphisms, the subset
\begin{equation}\label{fpf subgroup}
 \{ \lambda(\varphi(\sigma)) \rho(\psi(\sigma)) : \sigma\in G \}
 \end{equation}
of $\Hol(N)$ is a regular subgroup isomorphic to $G$, which implies that $(G,N)$ is realizable. As noted in \cite[Remark 7.2]{Byott soluble}, one can use this to show that:

\begin{prop}\label{exact factor}If $N = AB$ is exactly factorized by subgroups $A$ and $B$, then the pair $(A\times B,N)$ is realizable.
\end{prop}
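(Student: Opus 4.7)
The plan is to invoke the construction recalled just above the statement: for any fixed point free pair $\varphi,\psi : G\longrightarrow N$ of homomorphisms, the set (\ref{fpf subgroup}) is a regular subgroup of $\Hol(N)$ isomorphic to $G$, which witnesses realizability of $(G,N)$. Taking $G = A\times B$, my candidates are the two coordinate inclusions
\[ \varphi : A\times B \longrightarrow N,\ \varphi(a,b) = a, \qquad \psi : A\times B \longrightarrow N,\ \psi(a,b) = b, \]
namely the projection of $A\times B$ onto each factor followed by the inclusion of $A$, respectively $B$, into $N$. Both are clearly group homomorphisms.

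Next I would verify fixed point freeness. The equation $\varphi(a,b) = \psi(a,b)$ reads $a = b$, which forces $a = b \in A\cap B$. By exactness of the factorization $N = AB$, we have $A\cap B = 1$, so $(a,b) = (1_A,1_B)$, establishing fixed point freeness. At that point the cited result from \cite{BC} delivers a regular subgroup of $\Hol(N)$ isomorphic to $A\times B$, and the proposition follows. As a sanity check, the action of $\lambda(\varphi(a,b))\rho(\psi(a,b))$ on $1_N$ is $ab^{-1}$, so transitivity amounts to $N = AB^{-1} = AB$ and freeness amounts to $ab^{-1} = 1 \Rightarrow a = b \in A\cap B = 1$; both conditions are exactly the two halves of exactness of the factorization.

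There is essentially no obstacle here: the content lies entirely in choosing the correct pair $(\varphi,\psi)$. In particular, one cannot use a naive map such as $(a,b)\mapsto ab$, which need not be a homomorphism when $A$ and $B$ fail to centralize each other inside $N$. The direct product structure on the source $A\times B$ is exactly what makes the two coordinate maps into $N$ honest homomorphisms, while exactness of the factorization $N = AB$ is exactly what rules out nontrivial coincidences between $\varphi$ and $\psi$; together, these two ingredients are precisely what the construction (\ref{fpf subgroup}) demands.
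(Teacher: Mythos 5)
Your proposal is correct and coincides with the paper's own argument: the author also takes $\varphi(a,b)=a$, $\psi(a,b)=b$ and observes that exactness ($A\cap B=1$, together with $|A\times B|=|N|$) makes this a fixed point free pair of homomorphisms, so the construction of Byott--Childs yields the required regular subgroup. Nothing further is needed.
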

\begin{proof}Note that $A\times B$ does have the same order as $N$. The claim follows from the above discussion because $\varphi,\psi : A\times B \longrightarrow N$ given by $\varphi(a,b) = a$ and $\psi(a,b) = b$
is clearly a fixed point free pair of homomorphisms.
\end{proof}

In proving the backward implication of Theorem \ref{thm2}, if $N=AB$ has an exact factorization by solvable subgroups $A$ and $B$, then we can just apply Proposition \ref{exact factor}. But $\PSL_3(4)$ and $\mathrm{PSU}_4(2)$, for example, do not have such a factorization and we would need another way to deal with these groups.

\vspace{1.5mm}

For simplicity, let us focus on groups $N$ having trivial center. In this case, the map $\conj : N\longrightarrow \Inn(N)$ is an isomorphism, and one sees that the pair $f,h: G\longrightarrow \Aut(N)$ of homomorphisms in Proposition \ref{basic prop} is fixed point free because the map $g$ there is bijective and sends $1_G$ to $1_N$. Moreover, one can recover $g$ from the pair $f,h$ using (\ref{h def}). We may then rephrase Proposition \ref{basic prop} as follows, where (\ref{fpf subgroup'}) is a generalization of (\ref{fpf subgroup}); see Remark \ref{remark1}.

\begin{prop}\label{centerless}Assume that $N$ has trivial center. The regular subgroups of $\Hol(N)$ isomorphic to $G$ are exactly the subsets of the form
\begin{equation}\label{fpf subgroup'} \{ \rho(\conj^{-1}(h(\sigma)f(\sigma)^{-1}) ) \cdot f(\sigma) : \sigma\in G \},\end{equation}
where $f,h : G\longrightarrow \Aut(N)$ is a fixed point free pair of homomorphisms such that $f(\sigma)\equiv h(\sigma)\hspace{-1mm}\pmod{\Inn(N)}$ holds for all $\sigma\in G$.
\end{prop}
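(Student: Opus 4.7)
The plan is to derive Proposition \ref{centerless} as a reformulation of Proposition \ref{basic prop} under the additional assumption that $N$ has trivial center. The key observation will be that, once $\conj : N \to \Inn(N)$ is an isomorphism, the bijection $g : G \to N$ of Proposition \ref{basic prop} and the homomorphism $h : G \to \Aut(N)$ defined in (\ref{h def}) determine each other via
\[ h(\sigma) = \conj(g(\sigma))f(\sigma), \qquad g(\sigma) = \conj^{-1}(h(\sigma)f(\sigma)^{-1}). \]
The whole proof then reduces to rephrasing the conditions ``$g$ is a bijection satisfying the cocycle relation $g(\sigma\tau) = g(\sigma) \cdot f(\sigma)(g(\tau))$'' in terms of the pair $(f,h)$.

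For the forward implication, I will start with a regular subgroup in the form guaranteed by Proposition \ref{basic prop}. That proposition already supplies a homomorphism $h$ with $h(\sigma)f(\sigma)^{-1} = \conj(g(\sigma)) \in \Inn(N)$, which is precisely the congruence $f \equiv h \pmod{\Inn(N)}$. The cocycle relation at $\sigma = \tau = 1_G$ forces $g(1_G) = 1_N$, so combining the bijectivity of $g$ with the injectivity of $\conj$, one finds $f(\sigma) = h(\sigma) \Longleftrightarrow g(\sigma) = 1_N \Longleftrightarrow \sigma = 1_G$, so the pair $(f,h)$ is fixed point free. Substituting $g(\sigma) = \conj^{-1}(h(\sigma)f(\sigma)^{-1})$ into the description in Proposition \ref{basic prop} then yields (\ref{fpf subgroup'}).

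For the converse, given $(f,h)$ fixed point free with $f \equiv h \pmod{\Inn(N)}$, I will define $g(\sigma) = \conj^{-1}(h(\sigma)f(\sigma)^{-1})$, which is well-defined by the congruence condition, and verify the two hypotheses of Proposition \ref{basic prop}. Injectivity of $g$ reduces to the equivalence
\[ h(\sigma)f(\sigma)^{-1} = h(\tau)f(\tau)^{-1} \iff h(\tau^{-1}\sigma) = f(\tau^{-1}\sigma), \]
obtained by multiplying on the left by $h(\tau)^{-1}$ and on the right by $f(\sigma)$ and using that $f,h$ are homomorphisms; fixed point freeness then forces $\sigma = \tau$, and $|G| = |N|$ upgrades this to bijectivity. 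For the cocycle relation, I will apply $\conj$ to both sides of the proposed identity and use $\conj(\phi(x)) = \phi \cdot \conj(x) \cdot \phi^{-1}$ for $\phi \in \Aut(N)$; both sides then collapse to $h(\sigma\tau)f(\sigma\tau)^{-1}$ after routine homomorphism bookkeeping, and injectivity of $\conj$ finishes the check. The only step with any technical content is this last verification of the cocycle relation under $\conj$, but it is purely mechanical and introduces no ideas beyond those already present in Proposition \ref{basic prop}.
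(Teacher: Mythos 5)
Your proposal is correct and follows essentially the same route as the paper: both directions are obtained by translating between the bijection $g$ of Proposition \ref{basic prop} and the pair $(f,h)$ via $h(\sigma)=\conj(g(\sigma))f(\sigma)$, with fixed point freeness coming from the bijectivity of $g$, $g(1_G)=1_N$, and the triviality of the center. The only difference is that where the paper cites the calculation in \cite[Proposition 3.4]{Tsang NYJM} for the converse, you carry out that verification (injectivity of $g$ from fixed point freeness plus $|G|=|N|$, and the cocycle relation by applying $\conj$) explicitly, which is exactly the intended computation.
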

\begin{proof}It is clear from Proposition \ref{basic prop} and (\ref{h def}) that every regular subgroup of $\Hol(N)$ isomorphic to $G$ is of the stated shape. The converse is also true  by the calculation in \cite[Proposition 3.4]{Tsang NYJM}.\end{proof}

\begin{remark}\label{remark1}In the case that $N$ has trivial center, taking $f,h$ to have image lying inside $\Inn(N)$, we may recover the construction (\ref{fpf subgroup}) from (\ref{fpf subgroup'}). This is because then we can write
\[ f(\sigma) = \conj(\varphi(\sigma)) \mbox{ and }h(\sigma) = \conj(\psi(\sigma))\]
for homomorphisms $\varphi,\psi : G\longrightarrow N$. For any $\sigma\in G$, we see that
\begin{align*}
\rho(\conj^{-1}(h(\sigma)f(\sigma)^{-1}))\cdot f(\sigma)
& =\rho(\psi(\sigma)\varphi(\sigma)^{-1})\cdot \conj(\varphi(\sigma))\\
& = \rho(\psi(\sigma)\varphi(\sigma)^{-1})\cdot \rho(\varphi(\sigma))\lambda(\varphi(\sigma))\\
& = \lambda(\varphi(\sigma)) \rho(\psi(\sigma)).
\end{align*}
This shows that (\ref{fpf subgroup'}) and (\ref{fpf subgroup}) yield the same subgroup.
\end{remark}

We may now generalize Proposition \ref{exact factor} as follows; see Remark \ref{remark2}.

\begin{prop}\label{exact factor'} Assume that $N$ has trivial center and let $P$ be a subgroup of $\Aut(N)$ containing $\Inn(N)$. If $P = AB$ is exactly factorized by subgroups $A$ and $B$ such that $A\Inn(N) = B\Inn(N)$ and $A$ splits over $A\cap \Inn(N)$, then the pair $(A\cap\Inn(N) \rtimes_\alpha B, N)$ is realizable for a suitable choice of $\alpha$.
\end{prop}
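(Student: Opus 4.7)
The plan is to apply Proposition \ref{centerless} by exhibiting, for a suitable action $\alpha$, a fixed point free pair $f,h:G\longrightarrow\Aut(N)$ of homomorphisms satisfying $f\equiv h\pmod{\Inn(N)}$, where $G = (A\cap\Inn(N))\rtimes_\alpha B$. Once such a pair is produced, the subset (\ref{fpf subgroup'}) is automatically a regular subgroup of $\Hol(N)$ isomorphic to $G$, which is the realizability of $(G,N)$.

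I would begin by setting $I = A\cap\Inn(N)$ and using the splitting hypothesis to pick a complement $C\leq A$ with $A = I\rtimes C$. The central observation is that $C\cap\Inn(N) = C\cap I = 1$, so the natural projection $P\longrightarrow P/\Inn(N)$ restricts to an injection on $C$ whose image is $C\Inn(N)/\Inn(N) = A\Inn(N)/\Inn(N) = B\Inn(N)/\Inn(N)$. Composing the inverse of this isomorphism with the reduction $B\longrightarrow B\Inn(N)/\Inn(N)$ yields a homomorphism $c:B\longrightarrow C$ characterized by $c(b)\equiv b\pmod{\Inn(N)}$. Using $c$, I would define the action $\alpha(b)(\iota) = c(b)\iota c(b)^{-1}$, which genuinely lands in $I$ because $I\triangleleft A$ and $\Inn(N)\triangleleft\Aut(N)$ together force this conjugate into $A\cap\Inn(N) = I$. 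Forming $G = I\rtimes_\alpha B$, I would then set
\[ f(\iota,b) = b,\qquad h(\iota,b) = \iota\cdot c(b), \]
so that $f(G) = B$ while $h(G) = IC = A$.

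Verifying that $h$ is a homomorphism is a short calculation using the conjugation identity defining $\alpha$ together with the multiplicativity of $c$; that $f$ is a homomorphism is immediate; and the congruence $f\equiv h\pmod{\Inn(N)}$ is nothing but the defining property of $c$. The step I expect to be the most delicate---and the one that genuinely uses the \emph{exactness} of the factorization $P = AB$---is fixed point freeness: if $f(\iota,b) = h(\iota,b)$, then $b = \iota\cdot c(b)$ lies in $A$, so $b\in A\cap B = 1$, which in turn forces $c(b) = 1$ and therefore $\iota = 1$. Proposition \ref{centerless} then produces the desired regular subgroup of $\Hol(N)$ isomorphic to $G$, completing the proof.
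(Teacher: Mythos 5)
Your construction is, up to relabelling, exactly the one in the paper: the same complement $C$ of $I=A\cap\Inn(N)$ in $A$, the same homomorphism $c:B\to C$ obtained from $B\Inn(N)/\Inn(N)=A\Inn(N)/\Inn(N)\simeq C$, the same conjugation action $\alpha$, and essentially the same pair of homomorphisms (you have swapped the roles of $f$ and $h$ relative to the paper, which is harmless since both fixed point freeness and the congruence condition are symmetric in the pair). The verifications you sketch -- that $\alpha$ lands in $I$, that $h(\iota,b)=\iota\,c(b)$ is multiplicative, the congruence $f\equiv h\pmod{\Inn(N)}$, and fixed point freeness via $A\cap B=1$ -- all go through.

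There is, however, one missing step: you never check that $G=I\rtimes_\alpha B$ has the same order as $N$. This is not cosmetic. Realizability of $(G,N)$ is defined for groups of equal order, and Proposition \ref{centerless} is stated under the standing assumption $|G|=|N|$; a fixed point free pair only forces the corresponding action on $N$ to be free, and freeness gives regularity only when $|G|=|N|$, so the proposition cannot be invoked without this count. The paper supplies it by first observing $AB=A\Inn(N)$ (using $\Inn(N)\subseteq P=AB$ together with $A\Inn(N)=B\Inn(N)$) and then computing
\[
|\Inn(N)|=\frac{|AB|}{[A\Inn(N):\Inn(N)]}=\frac{|A||B|/|A\cap B|}{[A:I]}=|I||B|,
\]
which is the second place where exactness of the factorization is used (the first being fixed point freeness, as you noted). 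The gap is easily filled with this computation, but as written your argument stops short of justifying the application of Proposition \ref{centerless}.
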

\begin{proof}Write $A_0=A\cap \Inn(N)$ and $B_0 = B\cap \Inn(N)$. Under the hypothesis, there is a subgroup $C$ of $A$ such that $A=A_0\rtimes C$, and $B/B_0\simeq C$ via
\[\begin{tikzcd}[column sep = 1cm]
\theta:\dfrac{B}{B_0} \arrow{r}{\simeq}& \dfrac{B\Inn(N)}{\Inn(N)} \arrow[equal]{r}& \dfrac{A\Inn(N)}{\Inn(N)} \arrow{r}{\simeq}& \dfrac{A}{A_0}\arrow{r}{\simeq} & C,
\end{tikzcd}\]
where all of the appearing isomorphisms are the natural ones. Since $C$ acts on $A_0$ via conjugation, this induces an action of $B$ on $A_0$. Explicitly, define
\[ \alpha : B\longrightarrow\Aut(A_0);\,\ \alpha(b) = (x\mapsto \theta(bB_0)x\theta(bB_0)^{-1}).\]
Notice that $AB = A\Inn(N)$, where $\supseteq$ holds trivially since $P=AB$ contains $\Inn(N)$, while $\subseteq$ holds because $A\Inn(N) = B\Inn(N)$. We then deduce that
\[ |\Inn(N)| = \frac{|AB|}{[A\Inn(N) : \Inn(N)]} = \frac{|A||B|/ |A\cap B|}{[A:A_0]} = |A_0||B|.\]
where $A\cap B=1$ because the factorization $P= AB$ is assumed to be exact. It follows that  $A_0\rtimes_\alpha B$ indeed has the same order as $\Inn(N)\simeq N$.

\vspace{1.5mm}

Now, let us define $f,h : A_0\rtimes_\alpha B \longrightarrow \Aut(N)$ by setting
\[ f(a,b) = a\theta(bB_0) \mbox{ and } h(a,b) = b\mbox{ for }a\in A_0, \, b\in B.\]
Clearly $h$ is a homomorphism, and $f$ is also a homomorphism because 
\begin{align*}
f((a_1,b_1)(a_2,b_2)) & =  f(a_1 \alpha(b_1)(a_2),b_1b_2)\\
& = a_1 \theta(b_1B_0)a_2\theta(b_1B_0)^{-1}\cdot \theta(b_1b_2B_0)\\
& = a_1\theta(b_1B_0) \cdot a_2\theta(b_2B_0)\\
& = f(a_1,b_1)f(a_2,b_2)
\end{align*}
for all $a_1,a_2\in A_0$ and $b_1,b_2\in B$. Note that the images of $f$ and $h$ are equal to $A$ and $B$, respectively.  Since $A\cap B =1$, we have $f(G)\cap h(G) = 1$.

\vspace{1.5mm}

Let $a\in A_0$ and $b\in B$ be arbitrary. We see that
\[ f(a,b) = h(a,b) \implies a\theta(bB_0) = 1,\,  b = 1 \implies a = 1,\, b =1,\]
which means that $(f,h)$ is fixed point free. Moreover, since $AB = A\Inn(N)$ as noted above and $A=A_0\rtimes C$ with $A_0\subseteq \Inn(N)$, we may write 
\[b = c \cdot \conj(\eta)\mbox{ for some $c\in C$ and $\eta\in N$}.\]
By the definition of $\theta$, we have $\theta(bB_0) = c$, so then
\[ f(a,b) \equiv ac \equiv c \equiv b \equiv h(a,b)\pmmod{\Inn(N)}\]
because $a\in \Inn(N)$. The claim now follows from Proposition \ref{centerless}.
\end{proof}

\begin{remark}\label{remark2}In the case that $N$ has trivial center, taking $P = \Inn(N)\simeq N$, we may recover Proposition \ref{exact factor} from Proposition \ref{exact factor'}. Indeed, for any exact factorization $\Inn(N) = AB$, it is trivial that $A\Inn(N) = B\Inn(N)$ and that $A = A\cap \Inn(N)$ splits over itself.  Thus, in the proof of Proposition \ref{exact factor'}, we have $C =1$ and so $\alpha$ is simply the trivial homomorphism. This implies that $A\rtimes_\alpha B = A\times B$, and the $f,h$ constructed there are 
the natural projections onto $A,B$, respectively, just like the $\varphi,\psi$ in the proof of Proposition \ref{exact factor}. 
\end{remark}

We remark that part of our proof of Theorem \ref{thm2} involves computations in \textsc{Magma} \cite{magma}, and in our codes, we only compute with representatives $P$ of the conjugacy classes of subgroups in $\Aut(N)$. For each $P$, similarly we only compute with representatives $A,B$ of the conjugacy classes of subgroups in $P$. By the two propositions below, as far as the conditions in Corollary \ref{cor} and Proposition \ref{exact factor'} are concerned, no generality is lost in doing so.

 \vspace{1.5mm}
 
For any $\Gamma\subseteq\Aut(N)$ and $\pi\in \Aut(N)$, let us write $\Gamma_\pi = \pi\Gamma\pi^{-1}$.

\begin{prop}\label{conjP}Let $P$ be a subgroup of $\Aut(N)$ and let $A,B$ be subgroups of $P$. For any $\pi\in \Aut(N)$, the following hold.
\begin{enumerate}[(a)]
\item If $P$ contains $\Inn(N)$, then $P_\pi$ also contains $\Inn(N)$.
\item If $P=AB$, then $P_\pi = A_\pi B_\pi$.
\item If $A\cap B =1$, then $A_\pi\cap B_\pi=1$.
\item If $A\Inn(N) = B\Inn(N)$, then $A_\pi \Inn(N) = B_\pi\Inn(N)$.
\item If $A$ splits over $A\cap \Inn(N)$, then $A_\pi$ splits over $A_\pi\cap\Inn(N)$.
\end{enumerate}
\end{prop}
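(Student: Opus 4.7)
The proposition is essentially a bookkeeping exercise whose single conceptual input is the normality of $\Inn(N)$ in $\Aut(N)$. My plan is to verify each of (a)--(e) by direct computation with conjugates, using only two facts: conjugation is a group automorphism of $\Aut(N)$, and $\pi\,\Inn(N)\,\pi^{-1} = \Inn(N)$ for every $\pi\in\Aut(N)$.

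For (a), since $\Inn(N)\trianglelefteq\Aut(N)$ I have $\Inn(N) = \pi\,\Inn(N)\,\pi^{-1}\subseteq \pi P\pi^{-1} = P_\pi$. For (b), conjugation by $\pi$ is a bijection of $\Aut(N)$ that preserves products, so $P_\pi = \pi(AB)\pi^{-1} = (\pi A\pi^{-1})(\pi B\pi^{-1}) = A_\pi B_\pi$. For (c), conjugation is bijective, hence $A_\pi\cap B_\pi = \pi(A\cap B)\pi^{-1} = \pi\cdot 1\cdot\pi^{-1} = 1$.

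For (d), using normality of $\Inn(N)$ to pull $\pi^{-1}$ past $\Inn(N)$, I get
\[ A_\pi\Inn(N) = \pi A\pi^{-1}\Inn(N) = \pi A\,\Inn(N)\,\pi^{-1},\]
and similarly $B_\pi\Inn(N) = \pi B\,\Inn(N)\,\pi^{-1}$, so $A\Inn(N)=B\Inn(N)$ implies $A_\pi\Inn(N)=B_\pi\Inn(N)$. For (e), suppose $A = (A\cap\Inn(N))\rtimes C$ for some complement $C\le A$. Conjugating by $\pi$ preserves intersections and semidirect decompositions, and by normality of $\Inn(N)$,
\[ \pi(A\cap\Inn(N))\pi^{-1} = \pi A\pi^{-1}\cap \pi\,\Inn(N)\,\pi^{-1} = A_\pi\cap\Inn(N).\]
Hence $A_\pi = (A_\pi\cap\Inn(N))\rtimes C_\pi$, giving the splitting.

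No step presents any real obstacle; the only point one must be careful about is that in (d) and (e) one genuinely needs the normality of $\Inn(N)$ inside $\Aut(N)$ to rewrite $\pi^{-1}\Inn(N)$ as $\Inn(N)\pi^{-1}$ (respectively to identify $\pi(A\cap\Inn(N))\pi^{-1}$ with $A_\pi\cap\Inn(N)$). Everything else is formal manipulation of conjugates.
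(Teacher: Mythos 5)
Your proof is correct and follows the same route as the paper, which simply notes that (a), (d), (e) follow from the normality of $\Inn(N)$ in $\Aut(N)$ while (b) and (c) are obvious; you have merely written out the routine conjugation computations that the paper leaves implicit.
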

\begin{proof}Parts (a), (d), and (e) hold simply because $\Inn(N)$ is a normal subgroup of $\Aut(N)$, whereas parts (b) and (c) are obvious. 
\end{proof}

\begin{prop}\label{conjAB}Let $P=AB$ be a subgroup of $\Aut(N)$ containing $\Inn(N)$ that is factorized by subgroups $A,B$. For any $\pi_1,\pi_2\in P$, the following hold.
\begin{enumerate}[(a)]
\item We also have the factorization $P = A_{\pi_1}B_{\pi_2}$.
\item If $A\cap B = 1$, then $A_{\pi_1}\cap B_{\pi_2}=1$.
\item If $A\Inn(N) = B\Inn(N)$, then $A_{\pi_1}\Inn(N) =B_{\pi_2}\Inn(N)$.
\end{enumerate}
\end{prop}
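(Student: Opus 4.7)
The plan is to prove the three parts in order, with part (a) doing most of the work via a direct set-theoretic manipulation. To establish (a), I would rewrite the claim by conjugating: $\pi_1^{-1}(A_{\pi_1}B_{\pi_2})\pi_2 = A(\pi_1^{-1}\pi_2)B$. Since $\pi_1,\pi_2\in P=AB$, the element $\pi_1^{-1}\pi_2$ lies in $P$, so I may factor it as $\pi_1^{-1}\pi_2 = a_0 b_0$ with $a_0\in A$ and $b_0\in B$. Then $A(\pi_1^{-1}\pi_2)B = Aa_0b_0B = AB = P$, which yields $A_{\pi_1}B_{\pi_2} = \pi_1 P\pi_2^{-1} = P$, using $\pi_1,\pi_2\in P$.

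For part (b), I would simply combine (a) with the usual product-order formula. The hypothesis $A\cap B=1$ gives $|P|=|AB|=|A||B|$, and conjugation preserves cardinalities, so $|A_{\pi_1}|=|A|$ and $|B_{\pi_2}|=|B|$. Applying the same formula to $A_{\pi_1}B_{\pi_2}=P$ from (a) forces $|A_{\pi_1}\cap B_{\pi_2}|=1$, hence $A_{\pi_1}\cap B_{\pi_2}=1$.

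For part (c), the key preliminary observation is that the hypothesis actually forces $A\Inn(N)=B\Inn(N)=P$: this equals $P$ because $\Inn(N)\subseteq P$ implies $A\Inn(N)\subseteq P$, while $B\subseteq B\Inn(N)=A\Inn(N)$ gives the reverse inclusion $P=AB\subseteq A\Inn(N)$. Then, using that $\Inn(N)$ is normal in $\Aut(N)$, I compute $A_{\pi_1}\Inn(N) = \pi_1 A\pi_1^{-1}\Inn(N) = \pi_1(A\Inn(N))\pi_1^{-1} = \pi_1 P\pi_1^{-1} = P$, and identically $B_{\pi_2}\Inn(N)=P$, so the two are equal. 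There is no substantive obstacle in any of these steps; the proof is pure bookkeeping once one notices the trick of conjugating from the left by $\pi_1^{-1}$ and from the right by $\pi_2$ to reduce to the original factorization of $P$.
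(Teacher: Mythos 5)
Your proof is correct, and it follows the same basic strategy as the paper --- absorb the conjugating elements into the factorization $P=AB$, use the product-order formula for (b), and use normality of $\Inn(N)$ for (c) --- but two of your steps are packaged differently. In (a), the paper factors three elements ($\pi_1=b_1a_1$, $\pi_2=a_2b_2$, and then $b_1^{-1}a_2=ab$) to compute $A_{\pi_1}B_{\pi_2}=A_{b_1}B_{a_2}=b_1ABa_2^{-1}=P$, whereas you conjugate the whole product once and only factor $\pi_1^{-1}\pi_2=a_0b_0$, finishing with $A_{\pi_1}B_{\pi_2}=\pi_1P\pi_2^{-1}=P$; this is a mild but genuine streamlining. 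In (c), the paper argues directly that $A_{\pi_1}\Inn(N)=(A\Inn(N))_{b_1}=(B\Inn(N))_{b_1}=B\Inn(N)$ and symmetrically for $B_{\pi_2}$, an argument that never uses the containment $\Inn(N)\subseteq P$; you instead first show that the hypotheses force $A\Inn(N)=B\Inn(N)=P$ (the same observation the paper makes separately inside the proof of Proposition \ref{exact factor'}) and then note that conjugation by $\pi_1,\pi_2\in P$ fixes $P$. Your version is arguably cleaner given the stated hypotheses, while the paper's version of (c) is marginally more general in that it would survive dropping the assumption that $P$ contains $\Inn(N)$. Part (b) is identical in both proofs. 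No gaps.
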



\begin{proof}Since $P = AB = BA$, we may write
\[ \pi_1 = b_1a_1,\,\ \pi_2 = a_2b_2,\,\ b_1^{-1}a_2 = ab,\]
where $a_1,a_2,a\in A$ and $b_1,b_2,b\in B$. It follows that
\[ A_{\pi_1}B_{\pi_2} = A_{b_1}B_{a_2} = b_1 A abB a_2^{-1} = b_1ABa_2^{-1} = b_1BAa_2^{-1}=BA = P,\]
which proves (a). This in turn implies that
\[ |A\cap B| = \frac{|A||B|}{|AB|} = \frac{|A_{\pi_1}||B_{\pi_2}|}{|A_{\pi_1}B_{\pi_2}|}=|A_{\pi_1}\cap B_{\pi_2}|,\]
which yields (b). Finally, suppose that $A\Inn(N) = B\Inn(N)$. Since $\Inn(N)$ is a normal subgroup of $\Aut(N)$, we have
\begin{align*}
 A_{\pi_1}\Inn(N) 
  &= (A\Inn(N))_{b_1}
  = (B\Inn(N))_{b_1}
  = B\Inn(N)
  =A\Inn(N)\\
  B_{\pi_2}\Inn(N)
  &= (B\Inn(N))_{a_2}
  = (A\Inn(N))_{a_2}
  = A\Inn(N),
 \end{align*}
 which shows (c). This completes the proof.
\end{proof}

\section{Proof of Theorem \ref{thm1}}

Let $N$ be a finite almost simple group and assume that there is a solvable group $G$ for which $(G,N)$ is realizable. By Corollary \ref{cor}, there is a subgroup $P$ of $\Aut(N)$ containing $\Inn(N)$ such that $P=AB$ for some subgroups $A,B$ which are quotients of $G$. Here, there are two important points:
\begin{itemize}
\item Since $P$ lies between $\Inn(N)$ and $\Aut(N)$, it is also
 almost simple and its socle is isomorphic to that of $N$. Indeed, suppose that 
 \[ S \leq N \leq \Aut(S),\]
 where $S$ is a non-abelian simple group identified with $\Inn(S)$. Then as is well-known (or see \cite[Lemma 4.3]{Tsang NYJM}), the natural homomorphism 
\[\Aut(N)\longrightarrow\Aut(S);\,\ \varphi \mapsto \varphi|_S\]
induced by restriction is injective and it maps $\Inn(N)$ to $N$. It follows that $P$ is also embedded between $S$ and $\Aut(S)$, which means that $P$ is an almost simple group with socle isomorphic to $S$.
\vspace{1.5mm}
\item Since $G$ is solvable, its quotients $A$ and $B$ are also solvable. 
\end{itemize}
Almost simple groups which are factorizable as the product of two solvable subgroups have been studied in \cite[Proposition 4.1]{factor} (its proof is based on a result of \cite{Kazarin} and some computations in \textsc{Magma} \cite{magma}; it requires CFSG). In particular, it tells us that the socle of $P$, which is isomorphic to the socle of $N$, must be one of the groups stated in the theorem.

\section{Proof of Theorem \ref{thm2}}

In view of Theorem \ref{thm1}, it suffices to prove the backward implication. We shall split the finite non-abelian simple groups $N$ in question into four families, as follows. The citations in the parentheses indicate the propositions to be used to deal with the family.
\begin{enumerate}[(1)]
\item $N = \PSL_3(3), \mathrm{M}_{11}$ (Proposition \ref{exact factor});
\item $N = \PSL_3(4),\PSL_3(8), \mathrm{PSU}_4(2)$ (Proposition \ref{exact factor'});
\item $N=\mathrm{PSU}_3(8)$ (Proposition \ref{centerless});
\item $N=\PSL_2(q)$ with $q\neq 2,3$ a prime power (Proposition \ref{exact factor} for an even $q$, and Proposition \ref{exact factor'} for an odd $q$).
\end{enumerate}
 
 \begin{proof}[Proof of $(1)$] By running \textsc{Code 1} in the appendix, we find that $N=AB$ is exactly factorized by some solvable subgroups $A$ and $B$. It then follows from Proposition \ref{exact factor} that the pair $(G,N)$ is realizable for the group $G = A\times B$, which is clearly solvable. 
 \end{proof}
 
\begin{proof}[Proof of $(2)$] By running \textsc{Code 2} in the appendix, we find that $\Aut(N)$ has a subgroup $P$ containing $\Inn(N)$ such that $P=AB$ is exactly factorized by some solvable subgroups $A$ and $B$ for which 
\begin{itemize}
\item $A\Inn(N) = B\Inn(N)$;
\item $A$ splits over $A\cap \Inn(N)$.
\end{itemize}
Proposition \ref{exact factor'} then implies that the pair $(G,N)$ is realizable for a suitable semidirect product $G= (A\cap\Inn(N))\rtimes B$, which is clearly solvable.
\end{proof}

\begin{proof}[Proof of $(3)$] By Proposition \ref{centerless}, we only need to find a solvable group $G$ of the same order as $N$ and also a fixed point free pair $f,h: G\longrightarrow \Aut(N)$ of homomorphisms satisfying
\begin{equation}\label{cond}
f(\sigma) \equiv h(\sigma)\hspace{-3mm}\pmod{\Inn(N)}
\end{equation}
for all $\sigma\in G$. We shall do so using \textsc{Code 3} and \textsc{Code 4} in the appendix. 

\noindent Let us explain what we computed in these codes.
\begin{enumerate}[(i)]
\item \textsc{Code 3:} Finding candidates for $f(G)$ and $h(G)$.
\\ We find that up to conjugacy in $\Aut(N)$, there is only one subgroup $P$ of $\Aut(N)$ containing $\Inn(N)$ which admits a factorization $P=AB$ by solvable subgroups $A,B$ such that $|A|,|B|$ divide $|N|$ and the equality $A\Inn(N) = B\Inn(N)$ holds. Without loss of generality, let us take $A$ to have smaller order than $B$. We note that then
\[|A| = 513,\, |B| = 96768,\, A\cap B=1,\, [P:\Inn(N)]=9,\]
as computed in the code. The $f,h : G\longrightarrow\Aut(N)$ to be constructed will have images $f(G) = A$ and $h(G) = B$.
\vspace{1.5mm}
\item \textsc{Code 4 Part I:} Finding a candidate for $G$.
\\ Note that the subgroup $\rho(N)\rtimes A$ of $\Hol(N)$ is naturally isomorphic to the outer semidirect product $\Inn(N) \rtimes A$, where $A$ acts on $\Inn(N)$ via conjugation in $\Aut(N)$. We construct this semidirect product and find that up to conjugacy, it has only one solvable subgroup $G$ of the same order as $N$.
\vspace{1.5mm}
\item \textsc{Code 4 Part II:} Finding candidates for $\ker(f)$ and $\ker(h)$.
\\ Note that $|N| = 5515776$, and that
\[ \frac{|G|}{|A|}= \frac{5515776}{513} = 10752,\,\
 \frac{|G|}{|B|} = \frac{5515776}{96768} = 57.\]
 We find that $G$ has a unique normal subgroup $K_f$ of order $10752$, and similarly a unique normal subgroup $K_h$ of order $57$. We also have 
 \[ K_f\cap K_h=1,\, G/K_f\simeq A,\, G/K_h\simeq B,\]
 as verified in the code. The $f,h : G\longrightarrow\Aut(N)$ to be constructed will have kernels $\ker(f) =K_f$ and $\ker(h) = K_h$.
\vspace{1.5mm}
\item \textsc{Code 4 Part III:} Finding a desired fixed point free pair $(f,h)$.
\\ Let $q_f : G\longrightarrow G/K_f$ and $q_h : G\longrightarrow G/K_h$ denote the natural quotient maps. Also, we construct isomorphisms
\[ \varphi_f : G/K_f\longrightarrow A\mbox{ and }\varphi_h : G/K_h\longrightarrow B. \]
For any $\pi_A\in \Aut(A)$ and $\pi_B\in \Aut(B)$, we have the homomorphisms
\[ f, h: G\longrightarrow \Aut(N);\,\
\begin{cases}f = \pi_A \circ \varphi_f \circ q_f,\\ h = \pi_B\circ \varphi_h\circ q_h.\end{cases}\]
From their definitions, it is clear that
\[ f(G) = A,\,  \ker(f) = K_f,\, h(G) = B,\, \ker(h) = K_h.\]
Since $A\cap B = 1$ and $K_f\cap K_h=1$, we see that $(f,h)$ is fixed point free. We also need (\ref{cond}) to hold for all $\sigma\in G$, or equivalently for any set of generators $\sigma\in G$. Using the generators of $G, \, \Aut(A),\, \Aut(B)$ given by \textsc{Magma}, we find that (\ref{cond}) holds for all generators $\sigma$ of $G$ for suitable choices of $\pi_A\in \Aut(A)$ and $\pi_B\in \Aut(B)$.
\end{enumerate} 
We now conclude from Proposition \ref{centerless} that $(G,N)$ is realizable, where $G$ is solvable by construction.
\end{proof}

\begin{proof}[Proof of $(4)$]We regard $N$ as a normal subgroup of $\mathrm{PGL}_2(q)$ via the natural embedding, and similarly $\mathrm{PGL}_2(q)$ as a subgroup $\Aut(N)$ by letting it act on $N$ via conjugation. In other words, we shall view
\[ N \leq \mathrm{PGL}_2(q) \leq \Aut(N),\]
where $N$ is identified with $\Inn(N)$. Note that
\[
|\mathrm{PSL}_2(q)| = \frac{q(q-1)(q+1)}{\gcd(2,q-1)},\,\ 
|\mathrm{PGL}_2(q)|  = q(q-1)(q+1).
\]
As explained in \cite[Section 3.1]{factor}, we may factor $\mathrm{PGL}_2(q)$ as the product of a Singer cycle and the stabilizer of a one-dimensional subspace. Below, let us describe this factorization explicitly in terms of matrices.

\vspace{1.5mm}

Let $\beta$ be a generator of $\mathbb{F}_{q^2}^\times$ and let $X^2 + cX + d$ be its minimal polynomial over $\mathbb{F}_q$. For any $x_1,y_1,x_2,y_2\in\mathbb{F}_q$, from the relation $\beta^2 = -d-c\beta$, we have
\begin{align*}
(x_1 + y_1\beta )(x_2 + y_2\beta ) &= (x_1x_2 + y_1y_2\beta^2) + (x_1y_2+x_2y_1)\beta\\
&=(x_1 x_2 - dy_1y_2) + (x_1y_2 + x_2y_1 - cy_1y_2)\beta.
\end{align*}
By associating $x+y\beta$ to the matrix $\left[\begin{smallmatrix} x & - dy\\ y & x-cy\end{smallmatrix}\right]$, we see that multiplication in 

\noindent $\mathbb{F}_{q^2}^\times$ corresponds to matrix multiplication, so we obtain a cyclic subgroup
\[ \widetilde{A} = \left\{
\begin{bmatrix}
x & -dy\\
y & x-cy
\end{bmatrix}: x,y\in \mathbb{F}_q,\, (x,y) \neq (0,0)
\right\}\]
of $\mathrm{GL}_2(q)$ of order $q^2 - 1$ generated by $\left[\begin{smallmatrix} 0 & -d\\ 1 & -c\end{smallmatrix}\right]$. We also have the subgroup
\[ \widetilde{B} = \left\{ 
\begin{bmatrix}
u & v \\
0 & w 
\end{bmatrix} : u,w\in\mathbb{F}_{q}^\times,\, v\in\mathbb{F}_q
\right\}\]
of $\mathrm{GL}_2(q)$ of order $q(q-1)^2$; this is the stabilizer of the subspace generated by $\left[\begin{smallmatrix}1\\0\end{smallmatrix}\right]$, and is naturally isomorphic to $\mathbb{F}_q\rtimes (\mathbb{F}_q^\times\times \mathbb{F}_q^\times)$. Clearly $\widetilde{A}\cap \widetilde{B}$ equals the center $Z$ of $\mathrm{GL}_2(q)$. We then see that
\[ \mathrm{PGL}_2(q) = AB\mbox{ with }A\cap B = 1,\, \mbox{ where }A =  \widetilde{A}/Z\mbox{ and } B = \widetilde{B}/Z\]
are clearly solvable subgroups of order $q+1$ and $q(q-1)$, respectively.

\vspace{1.5mm}

First, suppose that $q\equiv 0\hspace{-1mm}\pmod{4}$. We then have $\mathrm{PGL}_2(q) = \mathrm{PSL}_2(q)$, and so by Proposition \ref{exact factor}, the pair $(G, N)$ is realizable for $G = A\times B$, which is clearly solvable.

\vspace{1.5mm}

Next, suppose that $q\not\equiv0\hspace{-1mm}\pmod{4}$. Observe that $d$, which is the norm of $\beta$ over $\mathbb{F}_q$, generates $\mathbb{F}_q^\times$ because $\beta$ is a generator of $\mathbb{F}_{q^2}^\times$ and the norm map for finite fields is surjective. For any $u\in\mathbb{F}_q^\times$, because $\det\left[\begin{smallmatrix} 0 & -d\\ 1 & -c\end{smallmatrix}\right]=d$ we then see that there exists $\widetilde{a}\in \widetilde{A}$ for which $\det(\widetilde{a}) = u$, and clearly there exists $\widetilde{b}\in \widetilde{B}$ for which   $\det(\widetilde{b}) = u$. It follows that $\widetilde{A}\cdot \mathrm{SL}_2(q) = \widetilde{B}\cdot \mathrm{SL}_2(q)$, which implies
\[A \cdot \PSL_2(q) = B \cdot \PSL_2(q).\]
We also deduce that the homomorphisms
\[ A\longrightarrow \mathbb{F}_p^\times/(\mathbb{F}_p^\times)^2,\,\ B\longrightarrow \mathbb{F}_p^\times/(\mathbb{F}_p^\times)^2 \]
induced by the determinant map are surjective. Thus, their kernels
\[ A_0 = A\cap \PSL_2(q)\mbox{ and }B_0 = B\cap \PSL_2(q)\]
have index $2$ in $A$ and $B$, respectively. In particular, we have
\[ |A_0| = \frac{q+1}{2},\,\ |B_0| = \frac{q(q-1)}{2}.\]
By the Schur-Zassenhaus theorem, we know that $A$ splits over $A_0$ when $|A_0|$\par\noindent is odd, and that $B$ splits over $B_0$ when $|B_0|$ is odd. From Proposition \ref{exact factor'}, it then follows that $(G,N)$ is realizable for the solvable group
\[ G = \begin{cases}
A_0\rtimes_\alpha B&\mbox{when }q\equiv 1\hspace{-3mm}\pmod{4},\\
B_0\rtimes_\alpha A&\mbox{when }q\equiv 3\hspace{-3mm}\pmod{4},
\end{cases}\]
where $\alpha$ is a suitable choice of homomorphism.
\end{proof}

This completes the proof of the theorem.

\section{Appendix: \textsc{Magma} codes}

\noindent\textsc{Code 1:}
\begin{verbatim}
N:=; 
//input PSL(3,3) and M11 for N
SolSub:=[S`subgroup:S in SolvableSubgroups(N)];
exists{<A,B>:A,B in SolSub|#(A meet B) eq 1 and #N eq #A*#B};
//output: true
\end{verbatim}

\vspace{1.5mm}

\noindent\textsc{Code 2}:
\begin{verbatim}
N:=; 
//input PSL(3,4), PSL(3,8), and PSU(4,2) for N
Aut:=PermutationGroup(AutomorphismGroup(N));
Inn:=Socle(Aut);
PP:=[P`subgroup:P in Subgroups(Aut)|Inn subset P`subgroup];
L:=[];
//A list to contain the P satisfying the desired conditions.
for P in PP do
SolSub:=[S`subgroup:S in SolvableSubgroups(P)];
  if exists{<A,B>:A,B in SolSub|
      #(A meet B) eq 1 and #P eq #A*#B and
      sub<P|A,Inn> eq sub<P|B,Inn> and
      HasComplement(A,A meet Inn)}
  then Append(~L,P);
  end if;
end for;
not IsEmpty(L);
//output: true
\end{verbatim}

\vspace{1.5mm}

\noindent\textsc{Code 3:}
\begin{verbatim}
N:=PSU(3,8);
Aut:=PermutationGroup(AutomorphismGroup(N));
Inn:=Socle(Aut);
PP:=[P`subgroup:P in Subgroups(Aut)|Inn subset P`subgroup];
for p in [1..#PP] do
P:=PP[p];
SolSub:=SolvableSubgroups(P:OrderDividing:=#N);
  for a in [1..#SolSub] do
  A:=SolSub[a]`subgroup;
    for b in [a..#SolSub] do
    B:=SolSub[b]`subgroup;
      if #A*#B eq #P*#(A meet B)
      and sub<P|A,Inn> eq sub<P|B,Inn> then
      <p,a,b,#A,#B,#(A meet B),#P/#N>;
      end if;
    end for;
  end for;
end for;
//output: <8, 127, 217, 513, 96768, 1, 9>
\end{verbatim}

\vspace{1.5mm}

\noindent\textsc{Code 4:}
\begin{verbatim}
//Setting up the P, A, and B that we found in Code 3.
N:=PSU(3,8);
Aut:=PermutationGroup(AutomorphismGroup(N));
Inn:=Socle(Aut);
PP:=[P`subgroup:P in Subgroups(Aut)|Inn subset P`subgroup];
P:=PP[8];
SolSub:=SolvableSubgroups(P:OrderDividing:=#N);
A:=SolSub[127]`subgroup;
B:=SolSub[217]`subgroup;

"Part I";
AutInn:=AutomorphismGroup(Inn);
xi:=hom<A->AutInn|a:->hom<Inn->Inn|x:->a^(-1)*x*a>>;
InnxA:=SemidirectProduct(Inn,A,xi);
GG:=SolvableSubgroups(InnxA:OrderEqual:=#N);
#GG;
//output: 1
G:=GG[1]`subgroup;

"Part II";
NorSubf:=NormalSubgroups(G:OrderEqual:=10752);
NorSubh:=NormalSubgroups(G:OrderEqual:=57);
<#NorSubf,#NorSubh>;
//output: <1, 1>
Kf:=NorSubf[1]`subgroup;
Kh:=NorSubh[1]`subgroup;
<#(Kf meet Kh),IsIsomorphic(G/Kf,A),IsIsomorphic(G/Kh,B)>;
//output: <1, true, true>

"Part III";
Qf, qf:=quo<G|Kf>;
Qh, qh:=quo<G|Kh>;
isof, phif:=IsIsomorphic(Qf,A);
isoh, phih:=IsIsomorphic(Qh,B);
GenG:=Generators(G);
GenAutA:=Generators(AutomorphismGroup(A));
GenAutB:=Generators(AutomorphismGroup(B));
Out, q:=quo<Aut|Inn>;
exists{<piA,piB>:piA in GenAutA, piB in GenAutB|
  forall{g:g in GenG|(qf*phif*piA*q)(g) eq (qh*phih*piB*q)(g)}
  };
//output: true
\end{verbatim}

\vspace{1.5mm}

\section*{Acknowledgments} 

This work was supported by JSPS KAKENHI (Grant-in-Aid for Research Activity Start-up) Grant Number 21K20319. 

\vspace{1.5mm}

We thank the referee for their comments, which helped improve the exposition of the introduction significantly.

\end{document}